\def\R{\mathbb{R}}
\newcommand{\1}{\boldsymbol{1}}
\DeclareMathOperator*{\argmax}{arg\,max}
\begin{document}
\title{Efficient and Robust Mixed-Integer Optimization Methods for Training Binarized Deep Neural Networks}

\author{\name Jannis Kurtz \email jannis.kurtz@uni-siegen.de \\
       \addr University of Siegen, School of Economic Disciplines, 57068 Siegen, Germany\\
       \AND
       \name Bubacarr Bah \email bubacarr@aims.ac.za \\
       \addr African Institute for Mathematical Sciences, Cape Town 7945, South Africa
       }

\editor{Kevin Murphy and Bernhard Sch{\"o}lkopf}

\maketitle

\begin{abstract}
Compared to classical deep neural networks its binarized versions can be useful for applications on resource-limited devices due to their reduction in memory consumption and computational demands. In this work we study deep neural networks with binary activation functions and continuous or integer weights (BDNN). We show that the BDNN can be reformulated as a mixed-integer linear program with bounded weight space which can be solved to global optimality by classical mixed-integer programming solvers. Additionally, a local search heuristic is presented to calculate locally optimal networks. Furthermore to improve efficiency we present an iterative data-splitting heuristic which iteratively splits the training set into smaller subsets by using the $k$-mean method. Afterwards all data points in a given subset are forced to follow the same activation pattern, which leads to a much smaller number of integer variables in the mixed-integer programming formulation and therefore to computational improvements. Finally for the first time a robust model is presented which enforces robustness of the BDNN during training. All methods are tested on random and real datasets and our results indicate that all models can often compete with or even outperform classical DNNs on small network architectures confirming the viability for applications having restricted memory or computing power.
\end{abstract}

\begin{keywords}
  Binarized Neural Networks, Integer Programming, Robust Optimization, Local Search, Heuristic
\end{keywords}

\section{Introduction}
Deep learning (DL) methods have of-late reinvigorated interest in artificial intelligence and data science, and they have had many successful applications in computer vision, natural language processing, and data analytics \citep{lecun2015deep}. The training of deep neural networks relies mostly on (stochastic) gradient descent, hence the use of differentiable activation functions like ReLU, sigmoid or the hyperbolic tangent is the state-of-the-art \citep{rumelhart1986learning,goodfellow2016deep}. On the contrary binary activations, which may be more analogous to biological activations, present a training challenge due to non-differentiability and even discontinuity. If additionally the weights are considered to be binary, the use of binary activation networks significantly reduces the computation and storage complexities, provides for better interpretation of solutions, and has the potential to be more robust to adversarial perturbations than the continuous networks \citep{qin2020binary}. Furthermore low-powered computations may benefit from binarized networks as a form of coarse quantization \citep{plagianakos2001training,bengio2013estimating,courbariaux2015binaryconnect,rastegari2016xnor}. Moreover, gradient descent-based training behaves like a black box, raising a lot of questions regarding the explainability and interpretability of internal representations \citep{hampson1990representing,plagianakos2001training,bengio2013estimating}.  

The interest in BDNNs goes back to \citet{mcculloch1943logical} where BDNNs were used to simulate Boolean functions. However, until the beginning of this century concerted efforts were made to train these networks either by specific schemes \citep{gray1992training,kohut2004boolean} or via back propagation by modifications of the gradient descent method \citep{widrow1988neural,toms1990training,barlett1992using,goodman1994learning,corwin1994iterative,plagianakos2001training}. More recent work regarding the back propagation, mostly motivated by the low complexity of computation and storage, build-on the pioneering works of \citet{bengio2013estimating,courbariaux2015binaryconnect,hubara2016binarized,rastegari2016xnor,kim2016bitwise}; see \cite{qin2020binary} for a detailed survey. Regarding the generalization error of BDNN, it was already proved that the VC-Dimension of deep neural networks with binary activation functions is $\rho\log (\rho)$, where $\rho$ is the number of weights of the BDNN; see \citet{baum1989size,maass1994perspectives,sakurai1993tighter}.

On the other hand, integer programming (IP) is known as a powerful tool to model a huge class of real-world optimization problems \citep{wolsey1998integer}. Recently it was successfully applied to evaluate trained deep neural networks \citep{fischetti2018deep,tjeng2017evaluating, anderson2020strong}. In \citet{lazarus2021mixed,jia2020efficient} efficient methods for verification of BDNNs were derived. 

Integer programming models for the training of BDNNs benefit from their high flexibility, since new constraints or regularizers can be added easily to the IP model without changing the solution methods. On the other hand, they lead to better interpretability due to the well understood polyhedral geometry and further mixed-integer programming theory. Despite the huge success in development of integer programming solvers like CPLEX or Gurobi, integer programming models for BDNNs often suffer under high computational demands during the training process. In \citet{icarte2019training} BDNNs with weights restricted to $\{-1,0,1\}$ are trained by a hybrid method based on constraint programming and mixed-integer programming. In \citet{thorbjarnarson2020training} mixed-integer-programming formulations are used to train BDNNs with different loss functions and an ensemble method is derived. The derived methods are compared to gradient-based methods. In \citet{khalil2018combinatorial} the authors calculate optimal adversarial examples for BDNNs using a MIP formulation and integer propagation. Furthermore, robust optimization approaches were used to protect against adversarial attacks for other machine learning methods \citep{xu2009robustness,xu2009robust,bertsimas2019robust}. Some of the results of this work were already presented by the authors in the ICML workshop paper \cite{bah2020integer}.

\paragraph{Contributions:} In this manuscript, we consider classification problems and show that the BDNN can be trained via a MIP formulation where the weight space can be assumed to be bounded if the weights are chosen to be continuous. The latter problem can be solved to global optimality by classical MIP solvers. All results can also be applied to regression problems and BDNNs with integer weights. We note that it is straight forward to extend the IP formulation to more general settings and several variations of the model. The key contributions of this work are the following.
\begin{itemize}
	\item The introduction of two implementation strategies that speed up the BDNN training, i.e. a local search and a data-splitting algorithm. While the local search algorithm was derived to circumvent the quadratic structure of the MIP formulation, the data-splitting algorithm improves efficiency by reducing the number of integer variables.
	\item The proposition of the first BDNN model which incorporates a robust optimization method during training, leading to BDNNs which are robust against data uncertainty. While other approaches are either able to verify robustness after training or incorporate adversarial examples during training without achieving a robustness guarantee, our method is able to train BDNNs with a given robustness guarantee.
	\item Simulations that corroborate our theoretical findings but also give new insights into the trade-offs resulting from the BDNN. We tested all presented methods on random and real datasets and compare the BDNN to a deep neural network using ReLU activations (DNN). Despite scalability issues and a slightly worse accuracy on random datasets, the results indicate that the heuristic version outperforms the DNN on the \textit{Breast Cancer Wisconsin} dataset. On the other hand the iterative data-splitting method turns out to be the most efficient method for training the BDNN leading to high accuracies on small network architectures. The robust BDNN model shows that the well-known trade-off between robustness and accuracy does not hold for BDNNs, leading to better or worse accuracies for different attack and defense levels.  
\end{itemize}  

\paragraph{Organization of the paper:}
The rest of the paper is organized as follows. In Section \ref{sec:DNN} we present the theoretical framework of the BDNN with the MIP formulation in Section \ref{sec:BDNN}. In Section \ref{sec:heuristic} we present a local search heuristic to calculate locally optimal networks. Additionally we present an iterative data-splitting algorithm in Section \ref{sec:data_splitting}. In Section \ref{sec:adversarialAttacks} we propose an approach for robustifying BDNNs and finally in Section \ref{sec:computations} we present the results of our numerical experiments.

\section{Binarized and Mixed-Binarized Neural Networks}\label{sec:DNN}
In this section we reformulate the BDNN as a mixed-integer program in Section \ref{sec:BDNN} and propose heuristic solution methods using a local search algorithm in Section \ref{sec:heuristic} and an iterative data splitting method in Section \ref{sec:data_splitting}.

\subsection{Mixed-Integer Programming Formulation}\label{sec:BDNN}
In this work we study a generalization of \textit{binarized deep neural networks} (BDNN), i.e. classical deep neural networks with binary activation functions where the weights are restricted either to a convex or discrete set. As in the classical framework, for a given input vector $\mathop{x\in \R^n}$ we study classification functions $f$ of the form 
\[\mathop{f(x)=\sigma^K\left( W^K\sigma^{K-1}\left( W^{K-1}\ldots ,\sigma^1\left(W^1x\right)\ldots \right)\right)}\]
for weight matrices $W^k\in D_k\subset \R^{d_{k}\times d_{k-1}}$ and activation functions $\sigma^k$, which are applied component-wise. The dimension $d_{k}$ is called the \textit{width} of the \textit{$k$-th layer}. All our results can be applied to arbitrary convex or discrete sets $D_k$, but in the following we focus on the two cases where $D_k=[-1,1]^{d_{k}\times d_{k-1}}$ or $D_k=\{-1,0,1\}^{d_{k}\times d_{k-1}}$. Furthermore bias vectors $b^k\in\R^{d_k}$ can be easily incorporated into each layer $k$ which are omitted for ease of notation. In contrast to the recent developments of the field we consider the activation functions to be binary, more precisely each function is of the form
\begin{equation}\label{eq:defActivationFunction}
\sigma^k(\alpha ) = \begin{cases}
0 & \text{ if } \alpha < \lambda_k \\
1 & \text{ otherwise}
\end{cases} 
\end{equation}

for $\alpha\in \R$ where the parameters $\lambda_k\in \R$ can be learned by our model simultaneously with the weight matrices which is normally not the case in the classical neural network approaches. Note that it is also possible to fix the values $\lambda_k$ in advance.

In the following we use the notation $[p]:=\left\{ 1,\ldots ,p\right\}$ for $p\in \mathbb N$. Given a set of labeled training samples \[X\times Y = \left\{ (x^i,y^i) \ | \ i\in [m]\right\}\subset \R^n\times \left\{ 0,1\right\}\] we consider loss functions
\begin{equation}\label{eq:defLossFunction}
\ell: \left\{ 0,1 \right\} \times \R^{d_K} \to \R
\end{equation}
and the task is to find the optimal weight matrices which minimize the empirical loss over the training samples, i.e. we want to solve the problem
\begin{equation}\label{eq:DNNDefinition}
\begin{aligned}
\min \ & \sum_{i=1}^{m} \ell\left( y^i , z^i\right) \\
s.t. \quad & z^i = \sigma^K\left( W^K\sigma^{K-1}\left(\ldots \sigma^1\left(W^1x^i\right)\ldots \right)\right) \ \ \forall i\in [m] \\
& W^k\in D_k \ \ \forall k\in[K] \\
& \lambda_k\in \R \ \ \forall k\in [K]
\end{aligned}
\end{equation}
for given dimensions $d_0,\ldots d_{K}$ where $d_0=n$. We set $d_{K}=2$, which is the number of classes, and use labels $y\in\left\{ 0,1\right\}$ indicating the class of the corresponding data point. We minimize the empirical classification error, i.e. we apply the loss function
\begin{equation}\label{eq:empiricalLoss}
\ell\left( y , z\right) = (2y-1)z_1 + (1-2y)z_2.
\end{equation}
Note that for class label $y=0$ the minimal loss is given by $z=(1,0)$, while for $y=1$ the minimal loss is attained at $z=(0,1)$. After the training process the predicted class of a data point $x$ is $$\argmax_{i=1,2} ~z_i $$
where $z\in \R^2$ is the output of the BDNN after applying the data point $x$. The problem can easily be extended to multiclass classification tasks by setting $d_K=c$, where $c$ is the number of classes and adjusting the loss function appropriately.

All results in this work also hold for \textit{regression problems}, more precisely for loss functions
\[
\ell_r: \R \times \R \to \R
\]
where we minimize the empirical loss $\sum_{i=1}^{m} \ell_r\left( y^i , z^i\right)$ instead. This case can be modeled by choosing the activation function of the last layer $\sigma^K$ as the identity and $d_K=1$. Any classical loss functions may be considered, e.g. the squared loss $\ell (y, z ) = \| y - z\|_2^2$.

%
%

In the following lemma we show how to reformulate Problem \eqref{eq:DNNDefinition} as a mixed-integer program with bounded $D_k$.
\begin{lemma}\label{lem:MINLP_formulation}
Assume the Euclidean norm of each data point in $X$ is bounded by $r$ and $D_k=\R^{d_k\times d_{k-1}}$ for each $k\in [K]$, then Problem \ref{eq:DNNDefinition} is equivalent to the mixed-integer non-linear program
\begin{align}
& \min \ ~\sum_{i=1}^{m} \ell\left( y^i , u^{i,K}\right) \quad s.t. \ \  \label{eq:MINLP_objective}\\
& W^1 x^i < M_1 u^{i,1} + \1\lambda_1 \quad i\in [m]  \label{eq:MINLP_firstlayer}\\
& W^1 x^i \ge M_1 (u^{i,1}-1) + \1\lambda_1 \quad i\in [m] \label{eq:MINLP_firstlayer2}\\
& W^k u^{i,k-1} < M_k u^{i,k} + \1\lambda_k  \ \ \forall k\in [K]\setminus \{1\}, i\in [m]\label{eq:MINLP_layers}\\
& W^k u^{i,k-1} \ge M_k(u^{i,k}-1) + \1\lambda_k  \ \ \forall k\in [K]\setminus \{1\}, i\in [m]\label{eq:MINLP_layers2}\\
& W^k \in[-1,1]^{d_k\times d_{k-1}}, \ \ \lambda_k\in[-1,1] \ \ \forall k\in [K] \label{eq:MINLP_variablesW}\\ 
& u^{i,k}\in \left\{ 0,1\right\}^{d_k} \ \ \forall i\in [m], k\in [K],\label{eq:MINLP_variables}
\end{align}
where $M_1:=(nr+1)$, $M_k:=(d_{k-1}+1)$ and $\1$ is the all ones vector.
\end{lemma}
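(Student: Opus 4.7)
The plan is to establish two facts: first, that restricting the weight matrices and thresholds to $[-1,1]$ leaves the optimal value of Problem \eqref{eq:DNNDefinition} unchanged, and second, that the binary activations can then be faithfully encoded by the big-$M$ pair of inequalities \eqref{eq:MINLP_firstlayer}--\eqref{eq:MINLP_layers2}.

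For the normalization step I would exploit the following scaling invariance: for any positive constant $c_k$, the activation pattern produced by $(W^k, \lambda_k)$ on a fixed input equals the one produced by $(c_k W^k, c_k \lambda_k)$, because $(W^k u^{i,k-1})_j \geq \lambda_k$ iff $c_k(W^k u^{i,k-1})_j \geq c_k \lambda_k$. Since only the binary activations are passed on to the next layer, the rescaling at layer $k$ does not affect layer $k+1$ and can therefore be performed independently at each $k$. Choosing $c_k = 1/\max(\|W^k\|_\infty, |\lambda_k|)$ (and $c_k = 1$ in the degenerate case where this maximum is zero) maps any feasible $(W^k, \lambda_k)$ to a pair in $[-1,1]^{d_k \times d_{k-1}} \times [-1,1]$ that produces the same network output and hence the same empirical loss.

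Next I would introduce, for each sample $x^i$ and each layer $k$, the binary vector $u^{i,k} \in \{0,1\}^{d_k}$ representing the activation pattern, with the convention $u^{i,0} := x^i$. Reading \eqref{eq:MINLP_firstlayer}--\eqref{eq:MINLP_layers2} component-wise, the case $u^{i,k}_j = 0$ reduces to $(W^k u^{i,k-1})_j < \lambda_k$ together with the (eventually redundant) bound $(W^k u^{i,k-1})_j \geq \lambda_k - M_k$, while the case $u^{i,k}_j = 1$ reduces to $(W^k u^{i,k-1})_j \geq \lambda_k$ and $(W^k u^{i,k-1})_j < \lambda_k + M_k$. Comparing with \eqref{eq:defActivationFunction}, this is exactly the condition $u^{i,k}_j = \sigma^k((W^k u^{i,k-1})_j)$, provided $M_k$ upper bounds $|(W^k u^{i,k-1})_j - \lambda_k|$ on the feasible region. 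The loss in \eqref{eq:MINLP_objective} then coincides with that in \eqref{eq:DNNDefinition} via $z^i = u^{i,K}$.

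Finally I would verify the choices of $M_1$ and $M_k$. For the first layer, $\|W^1\|_\infty \leq 1$ entrywise combined with $\|x^i\|_\infty \leq \|x^i\|_2 \leq r$ gives $|(W^1 x^i)_j| \leq n\|x^i\|_\infty \leq nr$, and together with $|\lambda_1| \leq 1$ this yields $|(W^1 x^i)_j - \lambda_1| \leq nr + 1 = M_1$. For layers $k \geq 2$, the facts that $u^{i,k-1} \in \{0,1\}^{d_{k-1}}$ and $\|W^k\|_\infty \leq 1$ entrywise produce $|(W^k u^{i,k-1})_j| \leq d_{k-1}$, so $|(W^k u^{i,k-1})_j - \lambda_k| \leq d_{k-1} + 1 = M_k$. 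I expect the normalization step to be the most delicate part, because $\lambda_k$ is a scalar shared across all $d_k$ rows of $W^k$ and so the scaling factor $c_k$ cannot be chosen row by row; one has to argue that a single $c_k$ per layer simultaneously achieves both $W^k \in [-1,1]^{d_k \times d_{k-1}}$ and $\lambda_k \in [-1,1]$ without disturbing any downstream layer.
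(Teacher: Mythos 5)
Your proposal is correct and follows essentially the same route as the paper's proof: a per-layer rescaling by the reciprocal of $\max(|\lambda_k|,\max_{j,l}|w^k_{jl}|)$ to normalize weights and thresholds into $[-1,1]$, followed by a componentwise big-$M$ verification with exactly the bounds $|{(W^1x^i)_j}|\le nr$ and $|{(W^ku^{i,k-1})_j}|\le d_{k-1}$. The only difference is cosmetic: you explicitly handle the degenerate case where the scaling denominator is zero, which the paper leaves implicit.
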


\begin{proof}
First we show that, due to the binary activation functions, we may assume $W^k\in[-1,1]^{d_k\times d_{k-1}}$ and $\lambda_k\in [-1,1]$ for all $k\in [K]$ in our model. To prove this assume we have any given solution $W^1,\ldots , W^K$ and corresponding $\lambda_1,\ldots ,\lambda_K$ of problem \eqref{eq:DNNDefinition} with arbitrary values in $\R$. Consider any fixed layer $k\in [K]$. The $k$-th layer receives a vector $h^{k-1}\in\{ 0, 1\}^{d_{k-1}}$ from the previous layer, which is applied to $W^k$ and afterwards the activation function $\sigma^k$ is applied component-wise, i.e. the output of the $k$-th layer is a vector
\[
h^k_j = \begin{cases} 0 & \text{ if } (w_j^k)^\top h^{k-1}<\lambda_k \\ 1 & \text{ otherwise}\end{cases}
\]
where $w_j^k$ is the $j$-th row of the matrix $W^k$. Set 
\[\beta:=\max\{|\lambda_k|,\max_{\substack{j=1,\ldots ,d_k\\ l=1,\ldots ,d_{k-1}}}|w_{jl}^k|\}\]
and define $\tilde W^k:=\frac{1}{\beta} W^k$ and $\tilde \lambda_k:=\frac{1}{\beta}\lambda_k$. Then replacing $W^k$ by $\tilde W^k$ and $\lambda_k$ by $\tilde \lambda_k$ in the $k$-th layer yields the same output vector $h^k$, since the inequality $ (w_j^k)^\top h^{k-1}<\lambda_k$ holds if and only if the inequality $ (\tilde w_j^k)^\top h^{k-1}<\tilde \lambda_k$ holds. Furthermore all entries of $\tilde W^k$ and $\tilde \lambda_k$ have values in $[-1,1]$.

Next we show that the constraints \eqref{eq:MINLP_firstlayer}--\eqref{eq:MINLP_layers2} correctly model the equation 
\begin{align}
z^i	:= & ~u^{i,K} \nonumber\\
	= & ~\sigma^K\left( W^K\sigma^{K-1}\left( W^{K-1}\ldots ,\sigma^1\left(W^1x^i\right)\ldots \right)\right) \nonumber
\end{align}
of Problem \ref{eq:DNNDefinition}. The main idea is that the $u^{i,k}$-variables model the output of the activation functions of data point $i$ in layer $k$, i.e. they have value $0$ if the activation value is $0$ or value $1$ otherwise. More precisely for any solution $W^1, \ldots, W^k$ and $\lambda_1,\ldots ,\lambda_k$ of the Problem in Lemma \ref{lem:MINLP_formulation} the variable $u_j^{i,1}$ is equal to $1$ if and only if $(w_j^1)^\top x^i\ge \lambda_1$ since otherwise Constraint \ref{eq:MINLP_firstlayer2} would be violated. Note that if $u_j^{i,1}=1$, then Constraint \ref{eq:MINLP_firstlayer} is always satisfied since all entries of $W^1$ are in $[-1,1]$ and all entries of $x^i$ are in $[-r,r]$ and therefore $|W^1 x^i|\le nr < M_1$. Similarly we can show that $u_j^{i,1}=0$ if and only if $(w_j^1)^\top x^i < \lambda_1$. Hence $u^{i,1}$ is the output of the first layer for data point $x^i$ to which $W^2$ is applied in Constraints \ref{eq:MINLP_layers} and \ref{eq:MINLP_layers2}. By the same reasoning applied to Constraints \ref{eq:MINLP_layers} and \ref{eq:MINLP_layers2} we can show that $u^{i,k}$ is equal to the output of the $k$-th layer for data point $x^i$ for each $k\in [K]\setminus \{ 1\}$. Note that instead of the value $nr$ we can use $d_{k-1}$ here since the entries of $u^{i,k-1}$ can only have values $0$ or $1$ and the dimension of the rows of $W^k$ is $d_{k-1}$.
\end{proof}

The formulation in Lemma \ref{lem:MINLP_formulation} is a non-linear mixed-integer programming (MINLP) formulation, since it contains products of variables, where each is a product of a continuous variable and an integer variable. We can apply the classical McCormick linearization technique and replace each product of variables $w_{lj}^k u_j^{i,k-1}$ in the formulation of Lemma \ref{lem:MINLP_formulation} by a new variable $\mathop{s_{lj}^{i,k}\in [-1,1]}$. To ensure that 
\[
w_{lj}^k u_j^{i,k-1} = s_{lj}^{i,k}
\]
holds, we have to add the set of inequalities
\begin{align*}
& s_{lj}^{i,k} \le u_j^{i,k} \\
& s_{lj}^{i,k} \ge -u_j^{i,k}\\
& s_{lj}^{i,k} \le w_{lj}^k + (1-u_j^{i,k})\\
& s_{lj}^{i,k} \ge w_{lj}^k - (1-u_j^{i,k}) .
\end{align*}
Note that if $u_j^{i,k}=0$, then the first two constraints ensure that $s_{lj}^{i,k}=0$. Since $w_{lj}^k\in [-1,1]$ this combination is also feasible for the last two constraints. If $u_j^{i,k}=1$, then the last two constraints ensure, that $s_{lj}^{i,k}=w_{lj}^k$, while $s_{lj}^{i,k}$ and $u_j^{i,k}$ are still feasible for the first two constraints. Applying the latter linearization we can transform the formulation of Lemma \ref{lem:MINLP_formulation} into a mixed-integer linear program (MILP) if we use the empirical error described in \eqref{eq:empiricalLoss}. Note that the same linearization can be used if the neural network weights are integer variables, i.e. if we consider $D_k=\{-1,0,1\}^{d_k\times d_{k-1}}$. Fortunately, modern off-the-shelf MIP solvers as CPLEX or Gurobi can handle products of integer and continuous variables, hence the problem formulation of Lemma \ref{lem:MINLP_formulation} could directly be passed to the solver. For the regression variant with mean-squared error we obtain a quadratic mixed-integer program.

Unfortunately, the number of integer variables of the MIP formulation is of order $\mathcal O(DKm)$, where $D$ is the maximum dimension of the layers, and therefore grows linear with the number of training samples and with the number of layers. For practical applications requiring large training sets solving the MIP formulation can be a hard or even impossible task. To tackle these difficulties we propose two more efficient  heuristics in the following subsections. Nevertheless despite the computational challenges the MIP formulation has a lot of advantages and can give further insights into the analysis of deep neural networks:

\begin{itemize}
	
	
	\item More general discrete activation functions of the form
	\[
	\sigma^k(\alpha) = 
	v \ \text{ if } \underline{\lambda_k^{v}}\le \alpha \le \overline{\lambda_k^v} , \ v\in V\subset\mathbb Z
	\]
	for a finite set $V$ and pairwise non-intersecting intervals $[\underline{\lambda_k^{v}},\overline{\lambda_k^v} ]$ can be modeled by adding copies $u^{i,k,v}$ of the $u^{i,k}$ variables for each $v\in V$ and adding the constraints
	\begin{align*}
	& W^k \left( \sum_{v\in V} vu^{i,k-1,v}\right) \le M_k \left(1-u^{i,k,v}\right) + \lambda_k^{v} \\
	& W^k \left( \sum_{v\in V} vu^{i,k-1,v}\right) \ge -M_k \left(1-u^{i,k,v}\right) + \lambda_k^{v}
	\end{align*}
	for each $v\in V$, $k\in [K]\setminus\{1\}$ and $i\in[m]$. The two constraints for the first layer are defined similarly, replacing $\left( \sum_{v\in V} vu^{i,k-1,v}\right)$ by $x^i$. Note that the values $\underline{\lambda_k^{v}},\overline{\lambda_k^v}$ either have to be fixed in advance for each $v\in V$ or additional constraints have to be added which ensure the interval structure.

	\item The MIP formulation can easily be adjusted for applications where further constraints are desired. E.g. sparsity constraints of the form
	\[
	\| W^k\|_0\le q
	\]
	for an integer $q$ can be easily added to the formulation. Here $\| W^k\|_0$ is the number of non-zero entries of $W^k$. 
	
	\item Any classical approaches handling uncertainty in the data can be applied to the MIP formulation. In Section \ref{sec:adversarialAttacks} we will apply a robust optimization approach to the MIP formulation.
	
	\item The model is very flexible regarding changes in the training set. To add new data points that were not yet considered we just have to add the corresponding variables and constraints for the new data points to our already existing model and restart a solver, which is based on the idea of online machine learning. Furthermore, instead of adding all data points to the formulation, random batches could be used; see Section \ref{sec:data_splitting}. 
	
	\item Classical solvers like Gurobi use branch \& bound methods to solve MIP formulations. During these methods at each time the optimality gap, i.e. the percental difference between the best known upper and lower bound, is known. These methods can be stopped after a desired optimality gap is reached.
\end{itemize}

\subsection{Local Search Heuristic}\label{sec:heuristic}
Besides the Big-M constraints one of the main challenges of Problem \ref{eq:DNNDefinition} is the quadratic structure appearing in Constraints \ref{eq:MINLP_layers} and \ref{eq:MINLP_layers2}. While it is possible to use standard linearization techniques to derive a linear mixed-integer formulation, such transformations often do not result in efficiently solvable problem formulations. To this end, in this section we present an heuristic algorithm that is based on local search applied to the non-linear formulation in Lemma \ref{lem:MINLP_formulation}, also known under the name \textit{Mountain-Climbing method}; see \citet{nahapetyan2009bilinear}. The idea is to avoid the quadratic terms by alternately optimizing the MINLP formulation over a subset of the variables and afterwards over the complement of the variables. Since for given weight variables $W$ the $u$-variables define the activation patterns of the training data, exactly one feasible solution for the $u$-variables exists. Therefore the described local search procedure would terminate after one iteration if we iterate between weight variables and activation-variables. To avoid this case we go through all layers and alternately fix the $u$ or the $W$-variables. The complementary problem uses exactly the opposite variables for the fixation. More precisely, iteratively we solve first the problem formulation from Lemma \ref{lem:MINLP_formulation} where we replace Constraints \ref{eq:MINLP_variablesW} by 
\[
W^k\in D_k, \ \lambda_k\in [-1,1]
\qquad\qquad\forall k\in [K]: k \text{ odd}
\]
and replace Constraints \ref{eq:MINLP_variables} by 
\[
u^{i,k}\in \left\{ 0,1\right\}^{d_k} \qquad\qquad\forall i\in [m], \ k\in [K-1]: k \text{ odd}.
\]
We denote this problem by H1. Afterwards we solve the problem formulation from Lemma \ref{lem:MINLP_formulation} where we replace Constraints \ref{eq:MINLP_variablesW} by 
\[
 W^k\in D_k,  \ \lambda_k\in [-1,1]
\qquad\qquad\forall k\in [K]: k \text{ even}
\]
and replace Constraints \ref{eq:MINLP_variables} by 
\[
u^{i,k}\in \left\{ 0,1\right\}^{d_k} \qquad\qquad\forall i\in [m], \ k\in [K-1]: k \text{ even}.
\]
We denote this problem by H2.
In Problem H1 all variables in 
\begin{equation*}
	V_{\text{fix}}^1:=\{W^k,\lambda_k, u^{i,k} \text{ where $k\neq K$ and $k$ is even}\}
\end{equation*}
are fixed to the optimal solution values of the preceding Problem H2, while in Problem H2 all variables in 
\begin{equation*}
	V_{\text{fix}}^2:=\{W^k,\lambda_k, u^{i,k} \text{ where $k\neq K$ and $k$ is odd}\}
\end{equation*}
are fixed to the optimal solution values of the preceding Problem H1. Note that both problems are linear mixed-integer problems with roughly half of the variables of Problem (MIP). The heuristic is shown in Algorithm \ref{alg:heuristic}. Note that Algorithm \ref{alg:heuristic} returns a locally optimal solution and terminates after a finite number of steps, since there only exist finitely many possible variable assignments for the variables $u^{i,K}$, and therefore only finitely many objective values exist.

\begin{algorithm}\caption{(Local Search Heuristic)}\label{alg:heuristic}
\begin{algorithmic}
\Require $X\times Y$, $K$, $d_0,\ldots ,d_K$
\Ensure $W^1, \ldots, W^K$
	\State Draw random values for the variables in $V_{\text{fix}}^1$
	\While{no better solution is found} 
	\State Calculate an optimal solution of (H1), if feasible, for the current fixations in $V_{\text{fix}}^1$.
	\State Set all values in $V_{\text{fix}}^2$ to the corresponding optimal solution values of (H1).
	\State Calculate an optimal solution of (H2), if feasible, for the current fixations in $V_{\text{fix}}^2$.
	\State Set all values in $V_{\text{fix}}^1$ to the corresponding optimal values of (H2).
	\EndWhile
	\State Return: $W=\{ W^{k}\}_{k\in [K]}$
\end{algorithmic}
\end{algorithm}

We test Algorithm \ref{alg:heuristic} in Section \ref{sec:computations} on small datasets and show that in certain cases the derived neural networks outperform the exact method and classical DNNs of the same size on the Breast Cancer Wisconsin dataset.

\subsection{Iterative Data Splitting Algorithm}\label{sec:data_splitting}
In the last section we derived an efficient heuristic to avoid the quadratic structure in Problem \ref{eq:DNNDefinition}. Nevertheless another main challenge of Problem \ref{eq:DNNDefinition} is the large number of $0$-$1$ variables $u^{i,k}$ which grows linearly in the number of data points. Each variable $u^{i,k}_j$ models the activation of neuron $j$ in layer $k$ if data point $i$ is applied to the network. We say the neuron is \textit{activated} if $u^{i,k}_j=1$. An assignment of $0$-$1$ values to all neurons of the network is called an \textit{activation pattern}. Each activation pattern can be identified with a polyhedral region in the data space, given by Constraints \ref{eq:MINLP_firstlayer} -- \ref{eq:MINLP_variablesW} after fixing the $u$-variables to the given activation pattern. Similar observations were already made in \citet{rister2017piecewise,montufar2014number,wang2018max,raghu2017expressive,goerigk2020data}. Therefore, a valid assumption is that data points which are close to each other in the data space often follow the same activation pattern of a trained neural network. Indeed in \citet{goerigk2020data} it was observed that the number of different activation patterns of the training data is often very small. 

Using the latter observations, the idea of the following procedure is to iteratively split the training set $X$ into smaller subsets by using a distance-based clustering method and assign the same activation variables to all data points contained in one subset. More precisely for a partition $[m]=I_1\cup\ldots \cup I_p$ of the index set we define $X_{I_j}=\{x^i: i\in I_j\}$ for each $j=1,\ldots ,p$ and consider the problem 
\begin{equation}\label{eq:MINLP_reducedUVariables}
\begin{aligned}
& \min \ \sum_{j=1}^{p}\sum_{i\in I_j} \ell\left( y^i , u^{I_j,K}\right) \quad s.t. \ \  \\
& W^1 x < M_1 u^{I_j,1} + \1\lambda_1 \quad \forall \ x\in X_{I_j} \ ,j\in [p]\\
& W^1 x \ge M_1 (u^{I_j,1}-1) + \1\lambda_1 \quad \forall \ x\in X_{I_j} \ ,j\in [p]\\
& W^k u^{I_j,k-1} < M_k u^{I_j,k} + \1\lambda_k  \ \ \forall k\in [K]\setminus \{1\}, j\in [p]\\
& W^k u^{I_j,k-1} \ge M_k(u^{I_j,k}-1) + \1\lambda_k  \ \ \forall k\in [K]\setminus \{1\}, j\in [p]\\
& W^k \in[-1,1]^{d_k\times d_{k-1}}, \ \ \lambda_k\in[-1,1] \ \ \forall k\in [K] \\ 
& u^{I_j,k}\in \left\{ 0,1\right\}^{d_k} \ \ \forall k\in [K], j\in [p]
\end{aligned}
\end{equation}
instead of Problem \ref{eq:DNNDefinition}. The idea here is that each data point $x\in X_{I_j}$ has to follow the same activation pattern which is determined by variables $u^{I_j,k}$. Note that the number of $u$ variables reduces from $\mathcal O(Km)$ to $\mathcal O(Kp)$ and hence can be controlled by the parameter $p$. Partitions of the index set $[m]$ are derived by iteratively splitting the subset $X_{I_j}$ which contains the largest number of misclassified data points by using $k$-means clustering. After each split we train the model by solving \eqref{eq:MINLP_reducedUVariables} for the new partition. For each data point in $X$ we have two constraints related to the first layer, hence to reduce the number of constraints, we consider a random batch of data points in each iteration. The procedure is shown in Algorithm \ref{alg:iterativemethod}.  

\begin{algorithm}\caption{(Iterative Data-Splitting Algorithm)}\label{alg:iterativemethod}
\begin{algorithmic}
\Require $X\times Y$, $K$, $d_0,\ldots ,d_K$, epochs $T$, batchsize $b$
\Ensure $W^1, \ldots, W^K$
	\State set $t=1$, $\mathcal I = \{ [m]\}$
	\While{$t\le T$} 
	\State Solve \eqref{eq:MINLP_reducedUVariables} with partition $\mathcal I$ and for a random batch of size $b$.
	\State Calculate $I_{\text{max}}\in \argmax_{I\in \mathcal I} \sum_{i\in I} \ell (y^i,u^{I,K})$
	\State Split $X_{I_{\text{max}}}$ into two clusters $X_{I_1}$ and $X_{I_2}$ using $k$-means.
	\State Remove $I_{\text{max}}$ from $\mathcal I$ and add $I_1$ and $I_2$
	\EndWhile
	\State Return: $W=\{ W^{k}\}_{k\in [K]}$
\end{algorithmic}
\end{algorithm}

Note that each iteration in Algorithm \ref{alg:iterativemethod} can be interpreted as the counterpart to an epoch of stochastic gradient descent. In each iteration Problem \ref{eq:MINLP_reducedUVariables} has a significantly smaller number of variables than the exact problem and can be solved e.g. by using a standard IP solver as CPLEX or Gurobi. The $k$-means procedure ensures that each derived subset in $\mathcal I$ contains data points which are close to each other. The larger the number of iterations, the finer the partition and therefore the better the loss of the BDNN. However since we draw random batches of data points in each iteration the loss of the subsequent iteration does not necessarily get smaller. Note that Problem \ref{eq:MINLP_reducedUVariables} is feasible for every given partition of the index set, since we can always set all network weights to $0$ and therefore each data point has the same activation pattern.


We test Algorithm \ref{alg:iterativemethod} in Section \ref{sec:computations} on several datasets and show that even for large datasets Algorithm \ref{alg:iterativemethod} often returns networks with high accuracy in reasonable time.

\section{Robust Binarized Neural Networks under Data Uncertainty}\label{sec:adversarialAttacks}
In this section we consider labeled data \[X\times Y=\left\{ (x^i,y^i) \ | \ i\in [m]\right\}\] where the Euclidean norm of each data point is bounded by $r>0$ and the data points are subject to uncertainty, i.e. a true data point $x^i\in\R^n$ can be perturbed by an unknown deviation vector $\delta\in\R^n$. In this case one goal is to derive BDNNs which are robust against data perturbation, i.e. if we add a small perturbation to the data point the predicted class of the BDNN should not change. One approach to tackle data uncertainty is based on the idea of robust optimization and was already studied for linear regression problems and support vector machines in \citet{bertsimas2019robust,xu2009robustness,xu2009robust}. IP models for the verification of the robustness of already trained deep neural networks were studied in \citet{tjeng2017evaluating,khalil2018combinatorial,venzkeRobustDNN}. Furthermore there are several approaches which use adversarial attacks to robustify the networks during training, without achieving a robustness guarantee \citep{venzke2020verification,kurakin2016adversarial,yuan2019adversarial}. In the following we derive the first model which ensures robustness during training of binarized neural networks. 

In the robust optimization setting, we assume that for each data point $x^i$ we have a convex set of possible deviation vectors $U^i\subset\R^n$ called \textit{uncertainty set} which is defined by
\[
U^i = \left\{ \delta\in \R^n \ | \ \|\delta\|\le r_i\right\}
\]
for a given norm $\|\cdot \|$ and radii $r_1,\ldots ,r_m$. Note that classical convex sets like boxes, polyhedra, or ellipsoids can be modeled as above by using the $\ell_\infty, \ell_1$ or $\ell_2$ norm. The task is to calculate the weights of a neural network which is robust against all possible data perturbations contained in the uncertainty set $U:=U^1\times \cdots\times U^m$, i.e. the predicted class of the neural network has to be the same for all perturbations in the uncertainty set. While the derivation of efficient robust counterparts for SVMs or linear regression problems is possible due to the simple structure of the problems, the non-linearity of neural networks makes this task much more difficult. As mentioned above robustness of an already trained neural network can be either tested via integer programming models or can be enforced during training by adding attacked data points to the training set which does not yield a robustness guarantee for the whole uncertainty set $U$. However due to the combinatorial structure of the binary activation functions we are able to ensure robustness already during training of the BDNN. This can be achieved by the following idea: if the activation values of the first layer neurons is the same for all possible perturbations of a data point, then the input of the second layer is the same $0$-$1$ vector for each possible perturbation of the data point and  therefore the output of the BDNN is the same. As a consequence, if we can ensure robustness in the first layer we obtain robustness of the whole network.

To ensure robustness in the first layer we have to ensure that for each data point $x^i$ and each possible perturbation in $U^i$ the activation variable $u^{i,1}$ does not change. This can be guaranteed by the following robust constraints:
\begin{align*}
& \max_{\delta\in U^i} ~W^1 (x^i+\delta) <  M_1^i u^{i,1} + \lambda_1 \\
& \min_{\delta\in U^i} ~W^1 (x^i+\delta) \ge M_1^i (u^{i,1}-1) + \lambda_1 .
\end{align*}
Note that for each feasible activation pattern $u^{i,1}\in\{0,1\}^{d_1}$ the latter constraints ensure that $x^i+\delta$ has the same activation pattern for every perturbation $\delta\in U^i$. A classical result from linear robust optimization is that we can consider the maximum and the minimum expression constraint-wise. We can reformulate the left-hand sides as
\[
\max_{\delta\in U^i} ~(w_j^1)^\top (x^i+\delta) = (w_j^1)^\top x^i + \max_{\delta\in U^i} ~(w_j^1)^\top\delta = (w_j^1)^\top x^i + r_i\|w_j^1\|^*
\] 
where $w_j^1$ is the $j$-th row of matrix $W^1$ and $\|\cdot\|^*$ is the dual norm of $\| \cdot \|$. Equivalently we can reformulate the left-hand sides of the second constraints as
\[
\min_{\delta\in U^i} ~(w_j^1)^\top (x^i+\delta) = (w_j^1)^\top x^i - r_i\|w_j^1\|^*
\] 
and therefore the robust BDNN model is given by
\begin{equation}\label{eq:robustBDNN}
\begin{aligned}
& \min \ ~\sum_{i=1}^{m} \ell\left( y^i , u^{i,K}\right) \quad s.t. \\
& (w_j^1)^\top x^i + r_i\|w_j^1\|^* <  M_1^i u^{i,1} + \lambda_1 \ \ \forall i\in [m], \ j\in [d_1] \\
& (w_j^1)^\top x^i - r_i\|w_j^1\|^*\ge M_1^i (u^{i,1}-1) + \lambda_1 \ \ \forall i\in [m], \ j\in [d_1]\\
&W^k u^{i,k-1} < M_k u^{i,k} + \1\lambda_k \ \ \forall i\in [m], k\in [K]\setminus \{1\}\\
&W^k u^{i,k-1} \ge M_k(u^{i,k}-1) + \1\lambda_k \ \ \forall i\in [m], k\in [K]\setminus \{1\}\\
& W^k\in D_k, \lambda_k\in [-1,1] \ \ \forall k\in [K]\\
&u^{i,k}\in \left\{ 0,1\right\}^{d_k} \ \ \forall i\in [m], \ k\in [K] 
\end{aligned}
\end{equation}
where $M_1^i:= n(r+r_i)$ and $M_k$ is defined as in Section \ref{sec:DNN}. Note that if we choose the euclidean norm, then we obtain a quadratic problem, while if we choose the $\ell_\infty$ or $\ell_1$-norm the latter problem can be transformed into a linear problem. In practical applications one of the main questions is how to choose the size of $U$, i.e. the radii $r_i$. In Section \ref{sec:computations} we test the robust model \eqref{eq:robustBDNN} for several magnitudes of uncertainty sets and attacks.

\section{Computations}\label{sec:computations}
In this section we first perform experiments on small datasets to test the exact model, denoted by BDNN, and the local search heuristic, denoted by LS (see Section \ref{sec:DNN} and \ref{sec:heuristic}). Since the computability of both models does not scale well with increasing input parameters we can only perform experiments on small datasets, small network architectures and without considering integer weights. We study two variants, one where we fix the thresholds $\lambda_k$ and another where we determine the threshold values during training. On the other hand in the second subsection we test the iterative data-splitting algorithm (see Section \ref{sec:data_splitting}), denoted by DS, which performs much better and could be tested on larger datasets, larger network architectures and with integer and continuous weights. Finally in the third subsection we test the robust BDNN version presented in Section \ref{sec:adversarialAttacks}, solved by the data-splitting algorithm, denoted by RO-BDNN. An overview about the considered datasets can be found in Table \ref{tbl:datasets}. Our Python code related to the experiments is made available online\footnote{\href{https://github.com/JannisKu/BDNN2021}{https://github.com/JannisKu/BDNN2021}}.

\begin{table}[h!]
\caption{Description of the considered datasets.}
\label{tbl:datasets}
\begin{center}
\begin{tabular}{l|llll}
Dataset & \# inst. & \# attr. & \# classes & class distr.  \\
\hline
Breast Cancer Wisconsin (BCW) & $699$ & $9$ & $2$ & $65.5\%$/$34.5\%$ \\
Default Credit Card (DCC) & $30000$ & $23$ & $2$ & $77.9\%$/$22.1\%$ \\
Iris & $150$ & $4$ & $3$ & $33.3\%$/$33.3\%$/$33.3\%$ \\
Boston Housing (BH) & $506$ & $13$ & $2$ & $50.6\%$/$49.4\%$ \\
Digit Dataset (DD) & $1797$ & $64$ & $10$ & $10\%$/\ldots /$10\%$
\end{tabular}
\end{center}
\vspace{-3mm}
\end{table}

\subsection{Exact model and local search heuristic}
In this section, we investigate the exact model presented in Lemma \ref{lem:MINLP_formulation} (BDNN) and the local search heuristic (LS) presented in Section \ref{sec:heuristic} . Both models are studied for continuous weights, i.e. $W^k\in [-1,1]^{d_k}$ and for two variants, one where the thresholds $\lambda_k$ are fixed to $0$ (denoted as BDNN$_0$) and another where the thresholds are derived during training (denoted as BDNN). We computationally compare both methods for the BDNN to the classical DNN with the ReLU activation functions. We study networks with one hidden layer of dimension $d_1$. All solution methods were implemented in Python 3.8 on an Intel(R) Core(TM) i5-4460 CPU with 3.20GHz and 8 GB RAM. The classical DNN was implemented by using the Keras API where we used the ReLU activation function on the hidden layer and the Softmax on the output layer. We used the binary cross entropy loss function. The number of epochs was set to $100$. The exact IP formulation is given in Lemma \ref{lem:MINLP_formulation} and all IP formulations used in the local search heuristic were implemented in Gurobi 9.0 with standard parameter settings. The strict inequalities in the IP formulations were replaced by non-strict inequalities adding $-0.0001$ to the right-hand-side. For the IP formulations, we set a time limit (wall time) of 24 hours.

\begin{figure}[h!]
\centering
\includegraphics[scale=0.4]{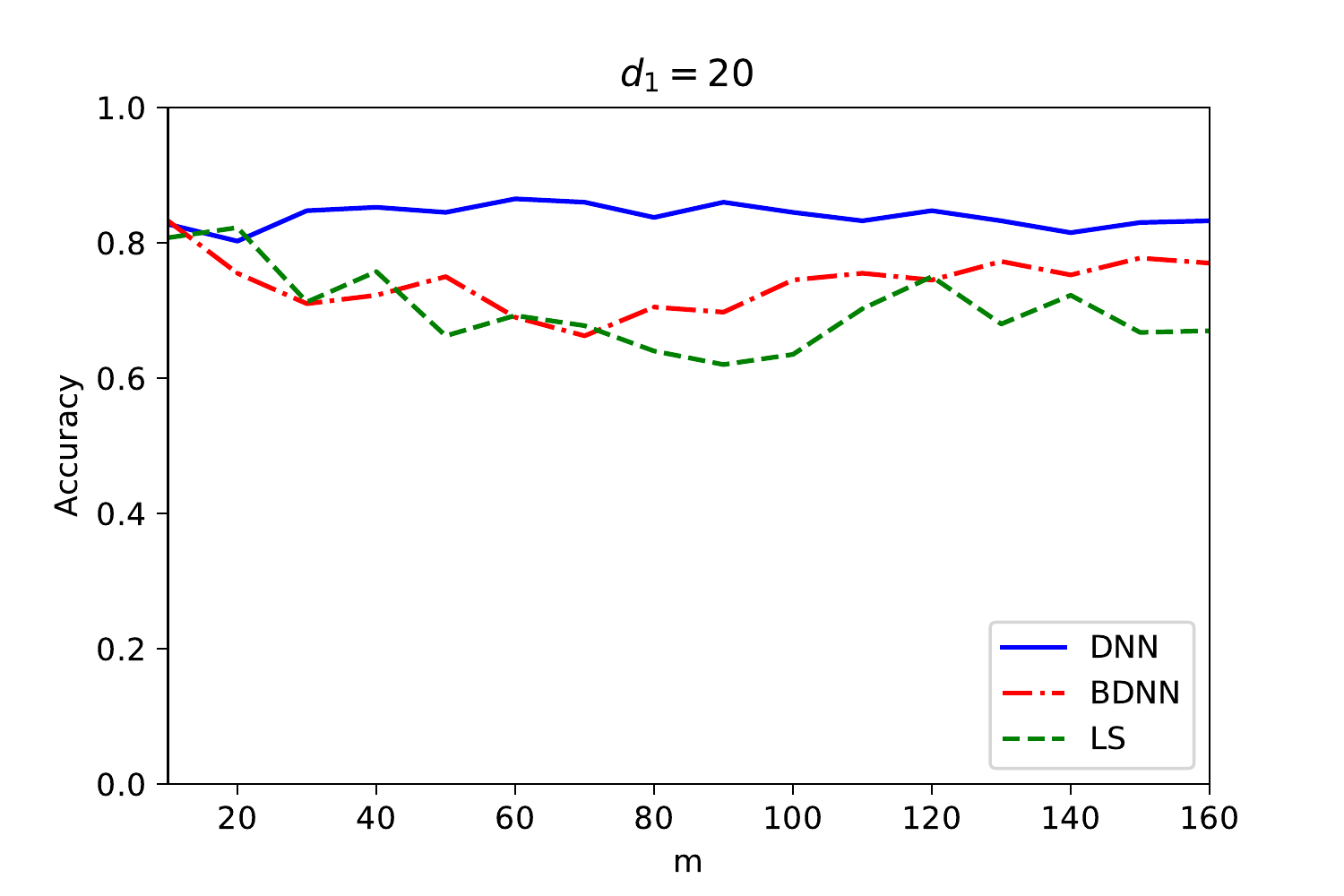}
\includegraphics[scale=0.4]{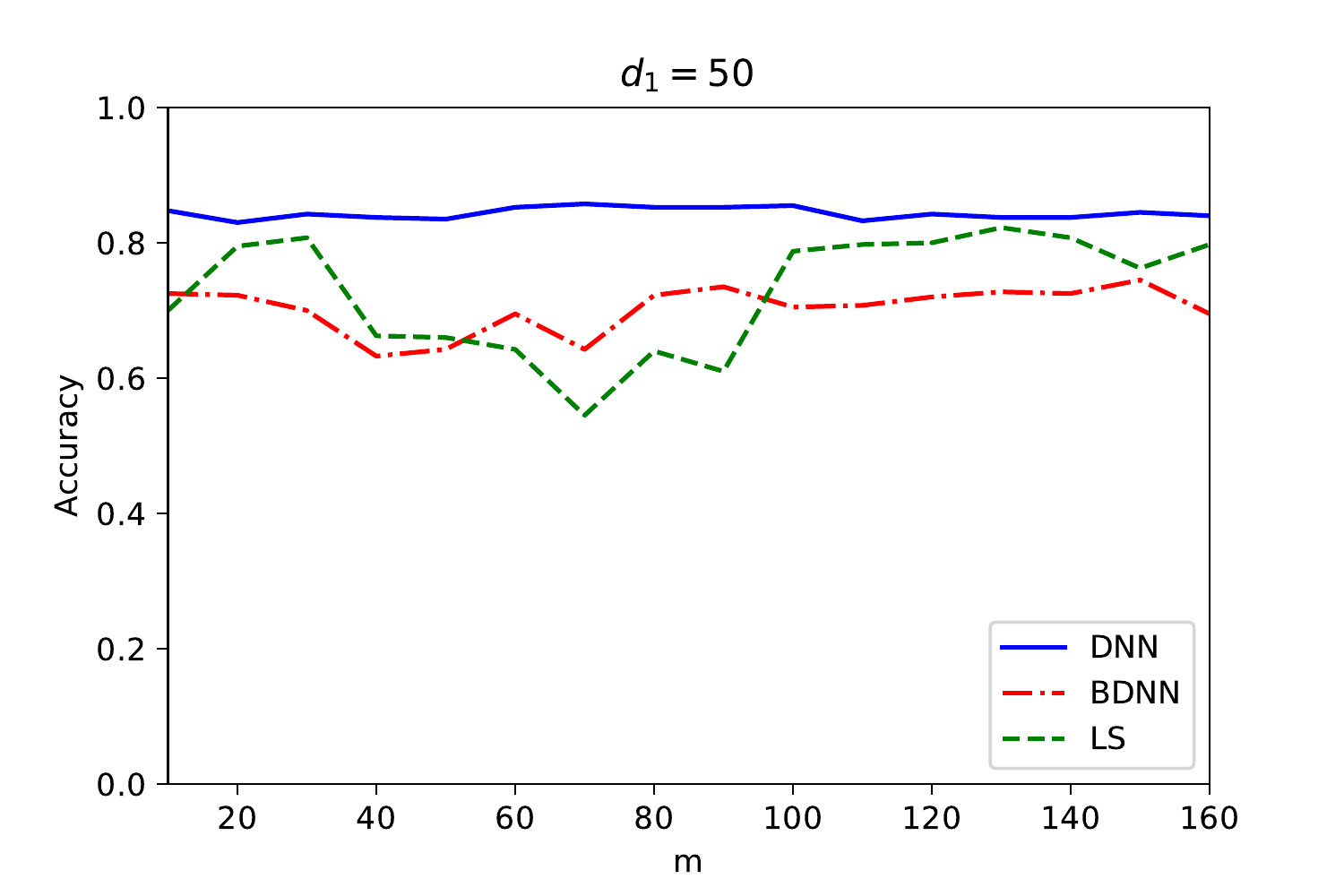}
\includegraphics[scale=0.4]{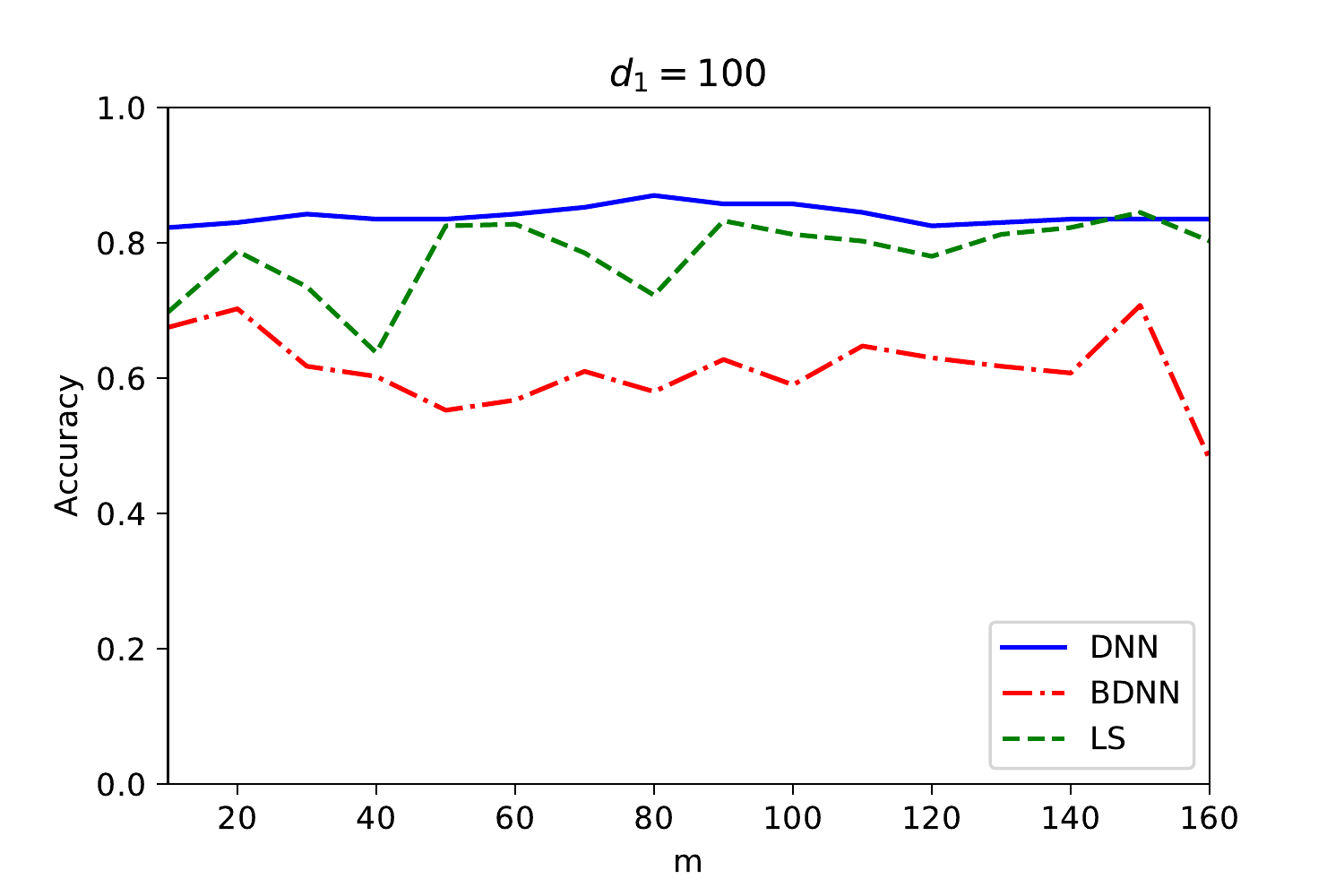}
\caption{Average accuracy over $10$ random instances for networks with one hidden layer of dimension $d_1\in \{ 20,50,100\}$ trained on $m\in \{10, 20, \ldots,160\}$ data points.}
\label{fig:plotsAcc}
\vspace{-3mm}
\end{figure}

We generated $10$ random datasets in dimension $n=100$ each with $M=200$ data points and $m\in \{10, 20, \ldots,160\}$ training samples. The entries of the data points were drawn from a uniform distribution with values in $[0,10]$ for one-third of the data points, having label $1$, and with values in $[-10,0]$ for the second third of the data points, having label $0$. The remaining data points were randomly drawn with entries in $[-1,1]$ and have randomly assigned labels. We split each dataset into a training set of $m\in \{10, 20, \ldots,160\}$ samples and a testing set of $40$ samples. All computations were implemented for neural networks with one hidden layer of dimension $d_1\in \{ 20,50,100\}$. Figure \ref{fig:plotsAcc} shows the average classification accuracy on the testing set over all $10$ datasets achieved by the methods trained on $m$ of the training points. The results indicate that the exact BDNN and LS have a lower accuracy than the DNN. Furthermore, the performance of both BDNN methods seem to be much more unstable and depend more on the choice of the training set. Interestingly for a hidden layer of dimension $100$, LS performs better than the exact version and can even compete with the classical DNN. In Figure \ref{fig:plotsTime} we show the runtime of all methods over $m\in \{10, 20, \ldots,160\}$. Clearly, the runtimes of the BDNN methods are much higher and seem to increase linearly in the number of data points. For real-world datasets with millions of data points, both methods will fail using state-of-the-art solvers. Surprisingly, the runtime of LS seems to be nearly the same as for the exact version, while the accuracy can be significantly better. We argue that local optima of the LS seem to perform better in terms of accuracy.

\begin{figure}[h!]
\centering
\includegraphics[scale=0.4]{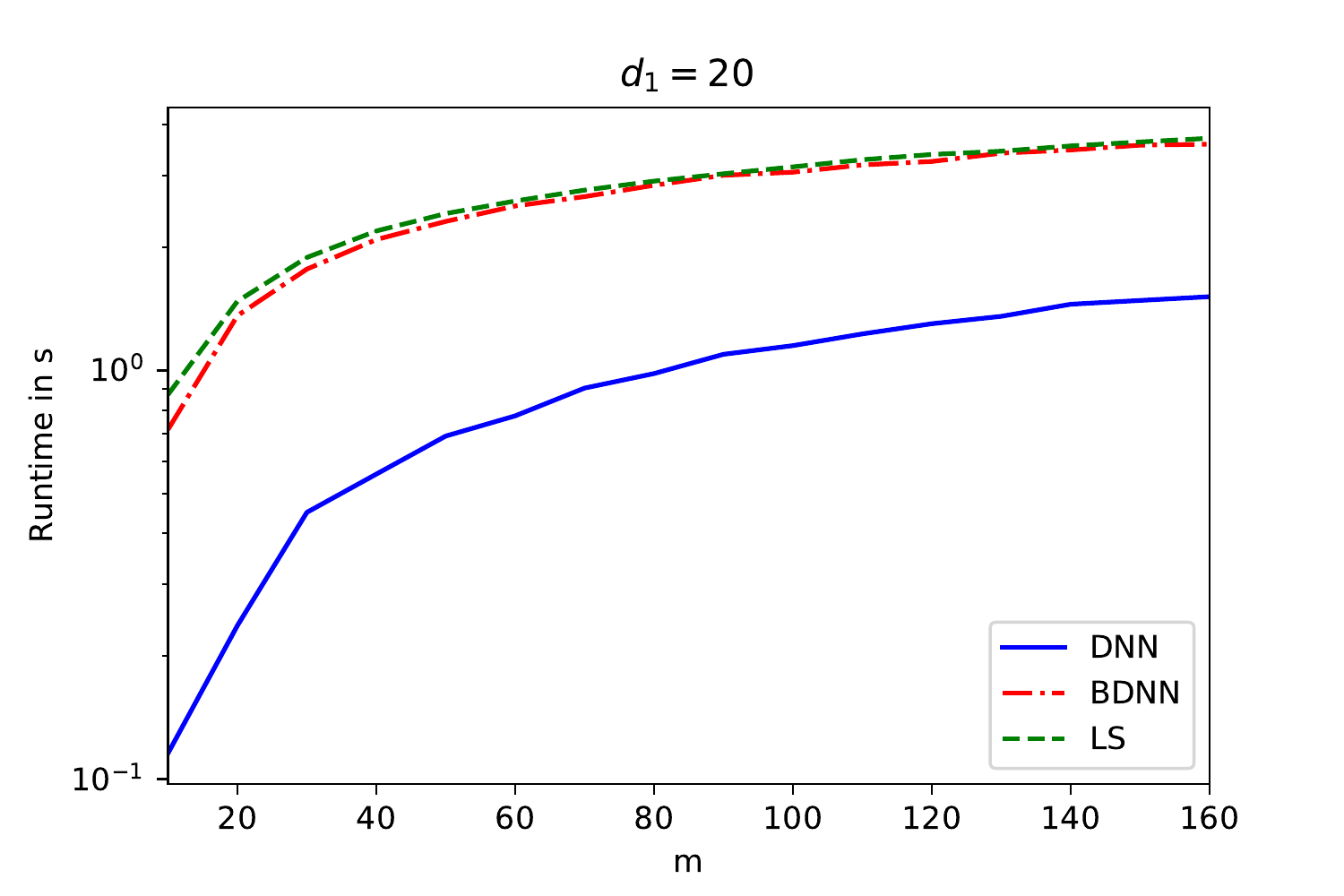}
\includegraphics[scale=0.4]{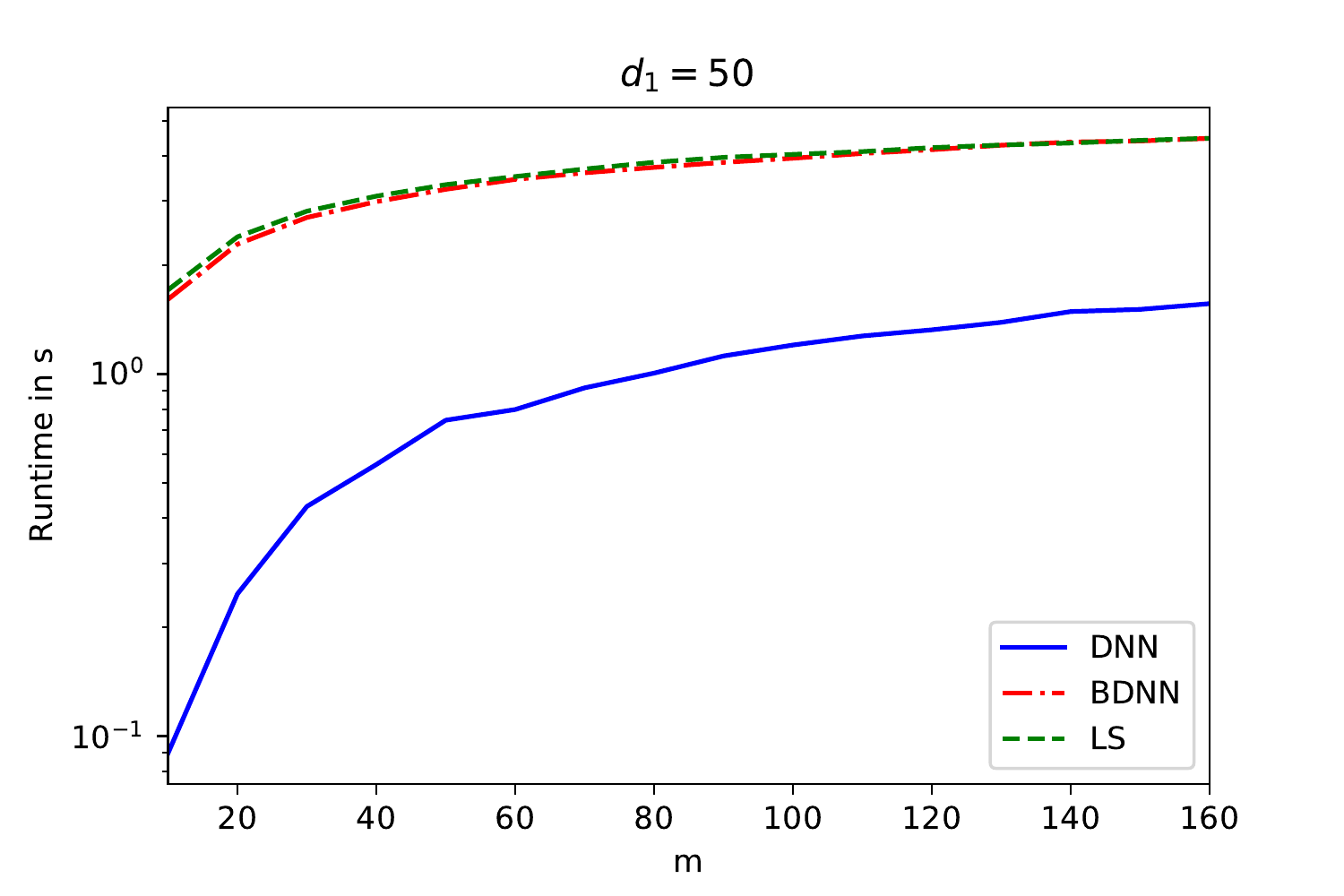}
\includegraphics[scale=0.4]{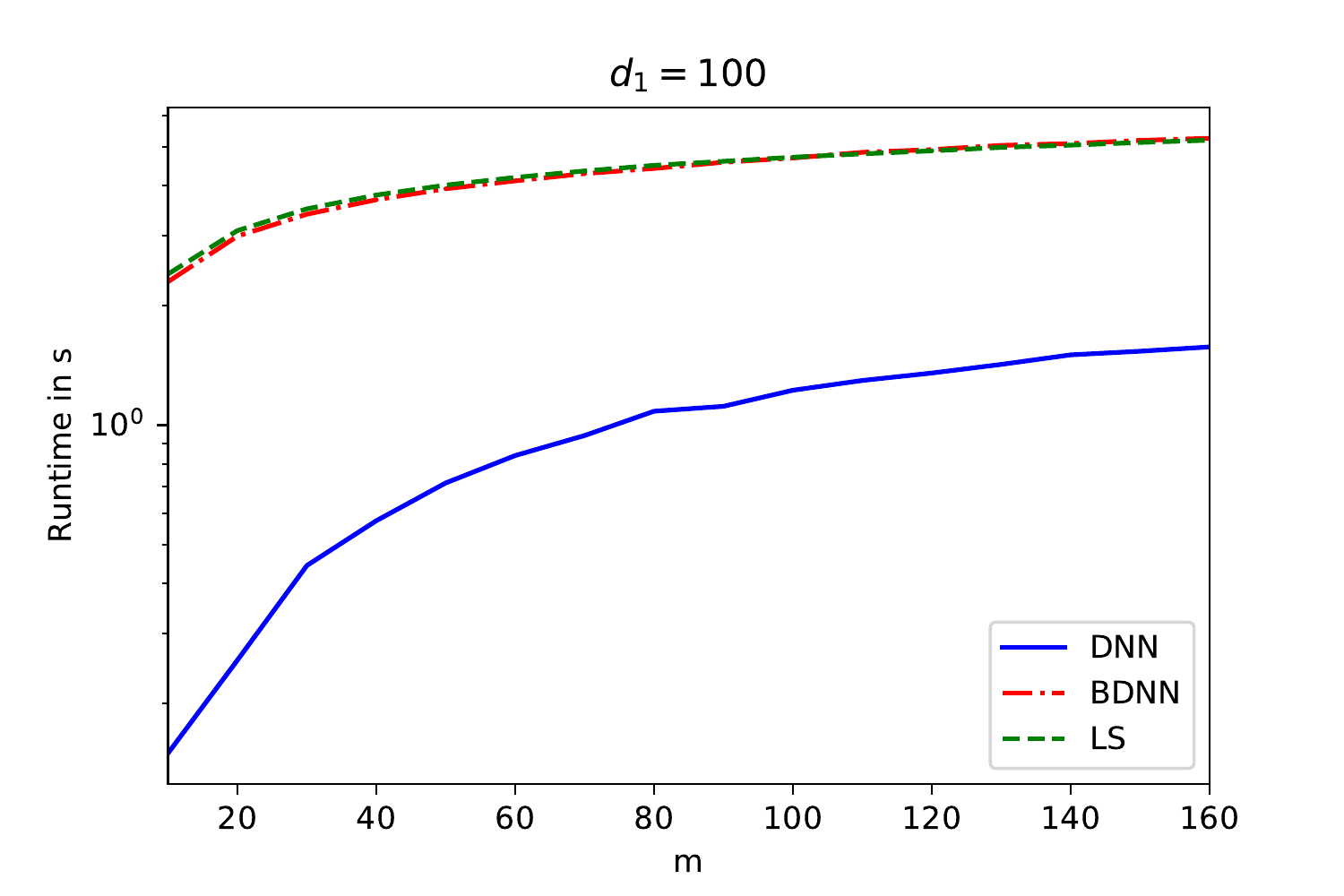}
\caption{Average runtime (logarithmic scale) over $10$ random instances for networks with one hidden layer of dimension $d_1$ trained on $m$ data points.} 
\label{fig:plotsTime}
\vspace{-3mm}
\end{figure}

Additionally, we study all methods on the Breast Cancer Wisconsin dataset (BCW) \citep{Dua:2019}. Here we also test the BDNN version where the values $\lambda_k$ are all set to $0$ instead of being part of the variables; we indicate this version by BDNN$_0$. The dataset was split into $80\%$ training data and $20\%$ testing data.  Again all computations were implemented for neural networks with one hidden layer of dimension $d_1\in \{ 25,50\}$. In Table \ref{tbl:breastcancerAcc} we show the accuracy, precision, recall and F1 score of all methods on the BCW dataset for a fixed shuffle of the data returned by the scikit-learn method \textit{train\_test\_split} with the seed set to $42$. It turns out that the exact BDNN performs better if the values $\lambda_k$ are set to $0$, while LS performs better if the $\lambda_k$ are trained. The LS method has the best performance for $d_1=25$, significantly better than the DNN. For $d_1=50$ the DNN is slightly better. Nevertheless, the best accuracy of $95\%$ for the BCW dataset was achieved by the LS method for $d_1=25$. Additionally we report the optimality gap after the time limit in Table \ref{tbl:breastcancerAcc} given by Gurobi. Only the BDNN$_0$ could not be solved to optimality during the time limit having a small gap of $0.51\%$. In Table \ref{tbl:breastcancerShuffles} we compare LS to the DNN on $10$ random shuffles of the BCW dataset and record the average, maximum, and minimum accuracies over all $10$ shuffles. It turns out that LS outperforms the DNN with the best accuracy of $97.1\%$ which leads to the conclusion that for small networks the BDNN can compete with the DNN in terms of the accuracy metric. Nevertheless as a drawback the training times for the BDNN methods are very large.

\begin{table}[h!]
\caption{Performance of exact BDNN and LS on the \textit{Breast Cancer Wisconsin} dataset.}
\label{tbl:breastcancerAcc}
\vskip 0.15in
\begin{center}
\begin{tabular}{lc|ccccc}
Method & $d_1$ & Acc. (\%) & Prec. (\%) & Rec. (\%) & F1 (\%) & Opt. Gap (\%)  \\
\hline
BDNN    & 25 & 69.3 & 48.0 & 69.3 & 56.7 & 0.0 \\
BDNN$_0$ & 25 & 83.6 & 83.2 & 83.6 & 83.1 & 0.0\\
BDNN LS & 25 & \textbf{95.0} & \textbf{95.0} & \textbf{95.0} & \textbf{95.0} & 0.0 \\
BDNN$_0$ LS & 25 & 30.0 & 38.7 & 30.0 & 17.6 & 0.0 \\
DNN    & 25 & 91.4 & 91.8 & 91.4 & 91.5 & - \\
\hline
BDNN    & 50 & 69.3 & 63.6 & 69.3 & 58.0 & 0.0 \\
BDNN$_0$ & 50 & 84.3 & 86.4 & 84.3 & 84.7 & 0.51\\
BDNN LS & 50 & 89.3 & \textbf{91.5} & 89.3 & 89.6 & 0.0 \\
BDNN$_0$ LS & 50 & 71.4 & 74.6 & 71.4 & 72.3 & 0.0 \\
DNN    & 50 & \textbf{91.4} & 91.4 & \textbf{91.4} & \textbf{91.3} & - \\
\end{tabular}
\end{center}
\vspace{-3mm}
\end{table}

%

\begin{table}[h!]
\caption{Accuracy (in \%) of the local search heuristic on the \textit{Breast Cancer Wisconsin} dataset over $10$ random shuffles of the data.}
\label{tbl:breastcancerShuffles}
\vskip 0.15in
\begin{center}
\begin{tabular}{lc|ccc}
Method & $d_1$ &  Avg. (\%) & Max (\%) & Min (\%)  \\
\hline
BDNN LS & 25 & \textbf{93.2} & \textbf{97.1} & 85.0\\
DNN    & 25 & 89.1 & 91.4 & \textbf{85.7}\\

\end{tabular}
\end{center}
\vspace{-3mm}
\end{table}

\subsection{Iterative Data-Splitting Algorithm}
In this Section we study the iterative data-splitting algorithm (DS) presented in Section \ref{sec:data_splitting}. As the experiments in the latter subsection show, the exact model and the local search heuristic are very hard to solve, and it is too costly to apply these methods on larger datasets, for larger networks or with integer weights. In this section we show that DS can be applied to larger datasets, neural networks with up to $3$ layers and can be even solved in reasonable time if we consider integer weights. At the same time this model often leads only to small reductions in accuracy or even outperforms the classical DNNs on small network architectures.

In the following we consider neural networks with $K\in\{1,2,3\}$ layers, a consistent layer width of $d_k\in\{ 50,100\}$ and bias vectors in each of the layers. For the DS algorithm we run $T=20$ epochs and use a batch size of $b=32$ for Iris and BCW datasets, while we use a batch size of $b=64$ for DCC, BH and DD datasets. For the data-splitting we use the $k$-means implementation of the scikit-learn package and Problem \ref{eq:MINLP_reducedUVariables} is solved by Gurobi 9.0. Each dataset is split into $50\%$ training samples, $25\%$ validation samples and $25\%$ testing samples. After each iteration of the DS algorithm we test the derived BDNN on the validation set and save the network which has the best validation accuracy over all epochs. Afterwards we calculate the accuracy of the best network on the testing set. 

Again we use the Keras API to train the DNNs, where we use ReLU activation functions on each hidden layer and the Softmax function on the output layer. We allow bias terms in each layer and use the categorical cross entropy loss function.  We consider the same number of epochs and the same batch sizes as for the DS calculations. As in the DS algorithm after each epoch we test the current DNN on the validation set and save the best network which is then evaluated on the test samples.  

All solution methods were implemented in Python 3.8 on a node with two AMD EPYC 7452 CPUs with 32 cores, 2.35-3.35 GHz, 128 MB Cache and a RAM of 256 GB DDR4 and 3200MHz.
\begin{table}[h!]
\caption{Accuracy (in \%) and runtime (in seconds) for BCW and DCC datasets.}
\label{tbl:splitAlgo_breast_cancer}
\begin{center}
\begin{tabular}{c|cc|ll|ll|ll}
&&&\multicolumn{2}{c|}{DNN} & \multicolumn{4}{c}{BDNN}\\
&&&& & \multicolumn{2}{c|}{integer weights} & \multicolumn{2}{c}{cont. weights}\\
Dataset & \# hidden layers & width & Acc. & $t$  & Acc.  & $t$  & Acc.  & $t$  \\
\hline
BCW&1&50&90.7&1.4&\textbf{96.9}&272.8&95.7&160.5 \\
&1&100&93.7&1.3&95.5&394.9&\textbf{97.1}&287.4\\
&2&50&92.6&1.4&\textbf{96.3}&1224.1&83.4&462.9\\
&2&100&94.0&1.5&\textbf{95.2}&1459.4&84.8&762.8\\
&3&50&94.5&1.6&\textbf{95.5}&3497.7&73.2&602.4\\
&3&100&\textbf{95.0}&1.7&93.0&3513.7&75.8&1031.2 \\
\hline
DCC&1&50&\textbf{78.1}&5.5&\textbf{78.1}&171.5&77.9&172.4 \\
&1&100&\textbf{78.0}&5.5&\textbf{78.0}&331.5&77.9&329.9\\
&2&50&77.9&5.6&\textbf{78.1}&186.6&\textbf{78.1}&187.7\\
&2&100&77.6&6.1&\textbf{77.7}&382.4&\textbf{77.7}&384.3\\
&3&50&77.5&6.7&77.8&203.0&\textbf{77.9}&203.4\\
&3&100&77.7&10.8&\textbf{77.9}&437.7&\textbf{77.9}&436.2
\end{tabular}
\end{center}
\vspace{-3mm}
\end{table}

In Table \ref{tbl:splitAlgo_breast_cancer} we consider BCW and DCC datasets and study classical DNNs and two variants of the BDNN, one with continuous weights $W^k\in [-1,1]^{d_k}$ and another with integer weights $W_k\in \{-1,0,1\}^{d_k}$. For all methods we report the average accuracy on the testing set and the runtime in seconds for each network architecture over $10$ random train-validation-test splits of the dataset. The results indicate that the BDNN with integer weights performs much better than the same model with continuous weights. On the BCW dataset the BDNN with integer weights even outperforms the classical DNNs in terms of accuracy on most of the network architectures. Only on the largest architecture the DNN achieves the best accuracy. Note that the best accuracy was achieved by the BDNN with continuous weights and a network with $1$ hidden layer of width $100$. On the other hand the computation time of both BDNN methods is much larger (up to one hour) than the computations for the DNN (at most 11 seconds). Here the computation times of the BDNN with continuous weights can be significantly smaller than for the same model with integer weights. On the DCC dataset all methods fail to learn any data information since predicting always the first class would achieve an accuracy of around $78\%$. 

\begin{table}[h!]
\caption{Accuracy (in \%) and runtime in seconds for various datasets.}
\label{tbl:splitAlgo_otherDatasets}
\begin{center}
\begin{tabular}{c|cc|ll|ll}
&&&\multicolumn{2}{c|}{DNN} & \multicolumn{2}{c}{BDNN}\\
Dataset & \# hidden layers & width & Acc. & $t$  & Acc.  & $t$ \\
\hline
Iris&1&50&40.3&1.1&\textbf{87.8}&2748.3 \\
&1&100&58.2&1.1&\textbf{92.1}&2483.7\\
&2&50&54.6&1.0&\textbf{68.1}&2101.2\\
&2&100&\textbf{87.0}&1.0&53.1&3862.7\\
&3&50&\textbf{87.4}&1.3&45.3&6460.7\\
&3&100&\textbf{85.0}&1.4&73.8&4779.1 \\
\hline
BH&1&50&73.1&1.4&\textbf{73.9}&774.7 \\
&1&100&\textbf{73.9}&1.3&73.1&1174.3\\
&2&50&\textbf{75.6}&1.1&60.4&858.6\\
&2&100&\textbf{76.1}&1.1&61.7&1606.9\\
&3&50&\textbf{78.0}&1.3&55.1&1150.7\\
&3&100&\textbf{77.0}&1.2&57.2&2240.1 \\
\hline
DD&1&50&\textbf{95.7}&1.9&72.0&3268.6 \\
&1&100&\textbf{97.2}&2.1&76.6&4433.5\\
&2&50&\textbf{95.8}&2.2&60.6&20259.3\\
&2&100&\textbf{98.5}&2.2&63.8&28238.2\\
&3&50&\textbf{96.2}&2.2&31.8&31548.6\\
&3&100&\textbf{98.8}&2.2&18.3&33321.3
\end{tabular}
\end{center}
\vspace{-3mm}
\end{table}

In Table \ref{tbl:splitAlgo_otherDatasets} we consider the Iris, BH and DD datasets and show the average performance of the DNN and the DS algorithm for BDNNs with integer weights, since the results in Table \ref{tbl:splitAlgo_breast_cancer} indicate that the latter performs better than the one with continuous weights. The results indicate that the DS algorithm outperforms the DNN in terms of accuracy on small network architectures for the BH and Iris datasets while it performs significantly worse on larger architectures. Nevertheless on the Iris dataset the overall best accuracy is achieved by the BDNN on a network with $1$ hidden layer of size $100$. On the other hand again the computation times of the BDNN are significantly larger. For the DD dataset the BDNN has a much smaller accuracy than the DNN. Furthermore the accuracy of the BDNN decreases significantly with increasing network size, while the accuracy of the DNN remains stable. Here we can assume that the BDNN needs a much larger number of data-splits due to the larger number of classes (10 classes). Furthermore due to the large number of features the computations for this dataset are very expensive. In summary the results show that we can train BDNNs with our integer programming model on much larger datasets and larger network architectures than state-of-the-art (note that no computations for integer programming models on the same datasets and the same network architecture were performed yet) and the accuracy values indicate that we can achieve high accuracy on small network architectures with the BDNN. Hence BDNNs are a reasonable choice when the memory consumption is restricted.

\begin{figure}[h!]
\centering
\includegraphics[scale=0.4]{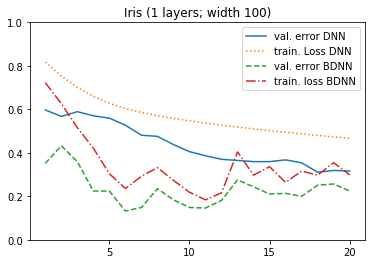}
\includegraphics[scale=0.4]{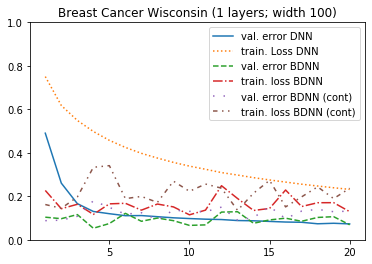}
\includegraphics[scale=0.4]{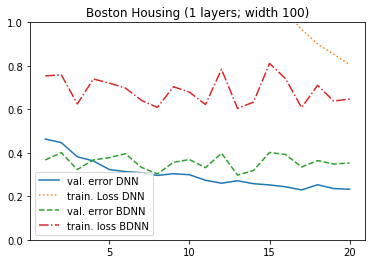}
\includegraphics[scale=0.4]{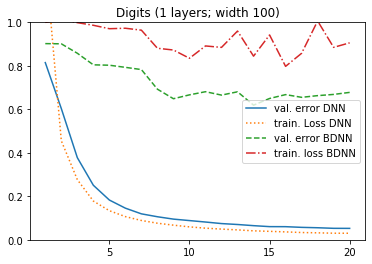}
\caption{Validation accuracy and training loss for each epoch for various datasets.} 
\label{fig:val_train_loss_datasets}
\vspace{-3mm}
\end{figure}

In Figure \ref{fig:val_train_loss_datasets} we show the average training and validation error (over $10$ random train-validation-test splits) in each epoch for the DS algorithm applied to BDNNs with integer weights and the Adam algorithm for DNNs. The results show that the validation and the training error for the BDNN are much more unstable and do not have a globally decreasing trend in contrast to the DNNs. Furthermore the validation error and the training error of the BDNN seem to be dependent, i.e. they increase or decrease at the same time. The same holds for BDNNs with continuous weights where both values are larger than for the integer version.

\begin{figure}[h!]
\centering
\includegraphics[scale=0.4,valign=t]{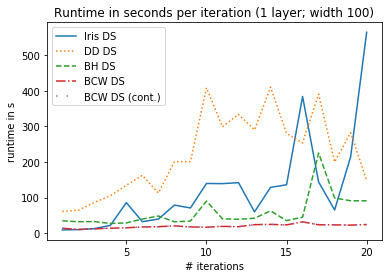}
\includegraphics[scale=0.4,valign=t]{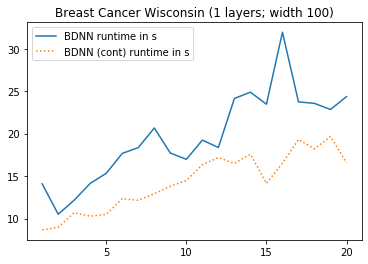}
\caption{Time to solve \eqref{eq:MINLP_reducedUVariables} in each epoch of the DS algorithm for various datasets.} 
\label{fig:runtime_datasets}
\end{figure}

In Figure \ref{fig:runtime_datasets} we show the average calculation time of Problem \ref{eq:MINLP_reducedUVariables} for each epoch of the DS algorithm for BDNNs with integer weights. Since in each iteration we add more integer variables to the Problem an increase in computation time would be the expectation. However although we can observe an increasing trend the runtime depends on the performance of the integer programming method of Gurobi and can fluctuate. The runtime of the continuous version on the BCW dataset seems to be much more stable with a smaller slope. 

\subsection{Robust Binarized Neural Networks}
In this section we test the robust BDNN model (RO-BDNN) presented in Section \ref{sec:adversarialAttacks}. To defend our model against possible attacks we consider uncertainty sets $U=U'\times \ldots \times U'$ where the set $U'$ is given by $U'=\{ \delta\in \R^n : \ \|\delta\|_0\le \varepsilon_d\}$ with different defense levels $\varepsilon_d\in \{ 0,0.25,0.5,0.75,1.0\}$. Since the impact of an attack depends on the size of the values of the different attributes we normalize the whole dataset by using the MinMaxScaler of the scikit-learn package and scale all attribute values to the interval $[0,1]$. Afterwards we train the RO-BDNN with uncertainty sets $U$ via DS algorithm with the same parameter setup as described in the previous section. After training we attack the test set by random $\|\cdot \|_0$-attacks of level $\varepsilon_a\in \{ 0, 0.5,1.0,1.5,2.0\}$. To this end for each test sample we draw a random attack vector $v_a\in\{-\varepsilon_a ,\varepsilon_a\}^n$ and add it to the test sample. As benchmark values we train classical DNNs on the training set and compare the performance on the attacked test sets. 

\begin{table}[h!]
\caption{Accuracy (in \%) of robust BDNN and DNN for attacked test sets.}
\label{tbl:robustAccuracy}
\begin{center}
\begin{tabular}{cc|l|llllll}
&&DNN&\multicolumn{6}{c}{BDNN} \\
Dataset & $\varepsilon_{a}$\textbackslash$\varepsilon_d$ & $0$ & $0$ & $0.025$ & $0.05$ & $0.1$ & $0.2$ & $0.3$ \\
\hline
BCW&0.0&96.1&95.8&95.9&\textbf{96.6}&95.9&85.3&67.7 \\
&0.1&95.7&95.3&95.7&95.7&\textbf{95.8}&84.1&67.7\\
&0.2&93.3&93.4&93.1&93.2&\textbf{93.9}&84.0&67.7\\
&0.5&80.5&\textbf{83.3}&76.5&79.3&76.7&74.4&66.7\\
&1.0&69.6&\textbf{70.9}&68.3&69.9&66.9&64.7&67.0\\
\hline
Iris&0.0&66.5&\textbf{89.5}&83.9&62.4&56.3&30.3&28.2 \\
&0.1&61.1&\textbf{82.4}&78.9&60.8&55.8&30.3&28.2\\
&0.2&55.1&\textbf{70.3}&69.5&58.7&51.8&30.3&28.2\\
&0.5&44.6&\textbf{49.2}&46.3&44.7&41.3&30.5&28.9\\
&1.0&38.5&\textbf{41.1}&32.9&35.5&36.6&30.5&28.7 \\
\hline
BH&0.0&\textbf{77.9}&77.2&73.2&70.5&63.4&51.1&51.1 \\
&0.1&\textbf{76.7}&73.9&71.1&70.6&63.4&51.2&51.4\\
&0.2&\textbf{73.9}&69.3&68.3&69.3&62.6&50.8&51.1\\
&0.5&\textbf{67.1}&57.8&63.7&60.5&59.8&47.8&48.8\\
&1.0&\textbf{60.5}&52.8&55.5&55.3&55.3&48.4&49.8
\end{tabular}
\end{center}
\vspace{-3mm}
\end{table}

In Table \ref{tbl:robustAccuracy} we show the average accuracies over $10$ random train-test-splits on the attacked test sets for different attack levels $\varepsilon_a$ and for different defense levels $\varepsilon_d$ for three datasets and networks with one hidden-layer of width $100$. The results show that the classical trade-off between robustness and accuracy which is often observed in interpretable robust machine learning (see e.g. \cite{dobriban2020provable,javanmard2020precise}) cannot always be observed for the BDNN. More precisely the classical trade-off can be described by the effect that robustness in the model leads to a lower accuracy for small attacks while the accuracies for larger attacks are better than for the non-robust models. We can see that the accuracy for non-attacked data ($\varepsilon_a=0$) drops for the Iris dataset after adding robust defenses of any size $\varepsilon_d$. Here enforcing the robustness seems to deteriorate the accuracy for all attack levels compared to the non-robust BDNN indicating that the classical BDNN is already robust for this dataset. However carefully adjusting the defense level to smaller values can still improve the accuracy. For the BCW dataset choosing a defense value of $\varepsilon_d=0.05$ or $\varepsilon_d=0.1$ increases the accuracy for small attack levels while for large attack values the non-robust BDNN performs best. Nevertheless using a defense value of $\varepsilon_d=0.05$ leads to good performances for larger attacks as well. Regarding the BH dataset the classical DNN performs better than the BDNN which was already observed in Table \ref{tbl:splitAlgo_otherDatasets}. However the results show that the non-robust BDNN performs better than the defended versions for small attacks, while for larger attacks the robust BDNN with defense level $\varepsilon_d=0.025$ performs better. In summary the BDNN seems to be very sensitive when enforcing robustness, leading sometimes to better performances for small or large attacks while sometimes the non-robust version can be the most robust one. The results indicate that the defense level has to be chosen carefully including very small values.

\section{Conclusion}
We show that binary deep neural networks can be modeled by mixed-integer programming formulations, which can be solved to global optimality by classical integer programming solvers. Additionally, we present a heuristic algorithm to derive local optimal solutions, leading to a better accuracy on small networks but hardly no improvement in calculation time. To overcome this issue we show that, using an iterative data-splitting heuristic (DS), we can decrease the number of integer variables and therefore the computation time. The results indicate that the solutions perform very well for small network architectures while suffering in terms of accuracy for larger architectures. Nevertheless they often achieve the best accuracy over all considered network architectures, which comes along with a significantly larger calculation time. In summary the results indicate that the DS method is favorable if small memory consumption and low evaluation complexity is desired. Additionally we consider a robust variant of the BDNN which sometimes suffers in terms of accuracy for different attack levels while achieving better accuracies than the non-robust DNN or BDNN for some attack-defense combinations. Nevertheless in future work the defense level should be adjusted carefully maybe involving a well designed validation process.

The mixed-integer programming formulation is very adjustable to variations of the model and could give new insights into the understanding of deep neural networks. The impact of different regularization methods, e.g. sparsity constraints, should be investigated in future works. This also motivates further research into the computational scalability of the IP methods and the study of other tractable reformulations or algorithms regarding the training of BDNNs.

\paragraph{Acknowledgments.} BB has been supported by BMBF through the German Research Chair at AIMS, administered by the Humboldt Foundation.

\bibliography{icmlref}

\end{document}